\newenvironment{proof}{\noindent {\it Proof.~~}\ }{\  \rule{1mm}{2mm}\medskip}
\newenvironment{proofof}[2]{\noindent {\it Proof of #1}~#2: \
}{~\rule{1mm}{2mm}\medskip}
\newtheorem{theorem}{Theorem}[section]
\newtheorem{lemma}[theorem]{Lemma}
\newtheorem{theirtheorem}{Theorem}
\newtheorem{theirlemma}[theirtheorem]{Lemma}
\begin{document}

\title{ Distinct Lengths Modular Zero-sum Subsequences: \\
A Proof of Graham's Conjecture  }

 \author{Weidong Gao, \thanks{Center for Combinatorics, LPMC, Nankai University,
 Tianjin, 300071, People's Republic of China} \ {Y. O. Hamidoune,}\thanks{
UPMC Univ Paris 06,
 E. Combinatoire, Case 189, 4 Place Jussieu,
75005 Paris, France.}
 \ Guoqing Wang \thanks{ Institute of Mathematics,
 Dalian University of Technology, Dalian, 116024, People's Republic of China}}
 \maketitle

\begin{abstract}
Let $n$ be a positive integer and let $S$ be  a sequence of $n$
integers in the interval $[0,n-1]$. If   there is an $r$ such that
any nonempty subsequence with  sum $\equiv 0$ $\pmod n$ has length
$=r,$ then $S$ has at most two distinct values. This proves a
conjecture of R. L. Graham. A previous result of P. Erd\H{o}s and E.
Szemer\'edi shows the validity of this conjecture if $n$ is a large
prime number.
\end{abstract}

{\sl Key Words}: length of sequence; sum $\equiv 0$ $\pmod n$;
modular zero-sum-free sequence.

\section{Introduction and main result}

We quote:

{`` Graham stated the following conjecture:

Let $p$ be a prime and $a_1, \ldots , a_p$ \ $p$ non-zero residues $\pmod p.$  Assume that if
$\sum\limits_{i=1}^p
\epsilon _ia_i$, $\epsilon _i=0$ or $1$ (not all $\epsilon _i=0$) is a multiple of $p$ then $\sum\limits_{i=1}^p
\epsilon _i$ is uniquely determined. The conjecture states that there only two distinct residues among the $a$'s.
We are going to prove this conjecture for all sufficiently large $p$. In fact we will give a sharper result.
To extend our proof for the small values of $p$ would require considerable computation, but no theoretical difficulty.
 Our proof is surprisingly complicated and we are not convinced that a simpler proof is not possible, but we could not
 find one.  (P. Erd\H{o}s and E. Szemer\'edi \cite{ES})''

The conviction that a simple proof must exist was restated by Erd\H{o}s and Graham in \cite{EG}.

In this work, we prove Graham's Conjecture for non necessarily prime noduli. Since our proof uses an ingredient (proved by elementary methods, but not very shortly), it could not
 the simple proof whose existence is suspected by Erd\H{o}s and  Szemer\'edi. Actually the Erd\H{o}s-Szemer\'edi Theorem
may be formulated equivalently as a modular Zero-sum statement:

\medskip
 \begin{theirtheorem} (Erd\H{o}s-Szemer\'edi \cite{ES}){\sl Let $p$ be a sufficiently large prime and let $S$ be  a sequence
of $p$ integers in the interval $[1,p-1]$. If   there is an $r$ such that
any nonempty subsequence with  sum $\equiv 0$ $\pmod p$ has length $=r,$
then $S$ has at most two distinct values.}
\end{theirtheorem}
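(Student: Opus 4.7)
My plan is a proof by contradiction: suppose $S$ takes at least three distinct residues modulo $p$ and derive the existence of two zero-sum subsequences of different lengths. Let the distinct values be $v_1,\dots,v_k$ with $k\ge 3$ and multiplicities $m_1\le\cdots\le m_k$ summing to $p$. A nonempty zero-sum subsequence exists by pigeonhole applied to the $p+1$ consecutive partial sums of any ordering of $S$, so $r\in\{1,\dots,p\}$ is well defined, and $r\ge 2$ since no $v_t\equiv 0\pmod p$. Letting $N_j$ count length-$j$ zero-sum subsequences, character orthogonality on $\mathbb{Z}/p\mathbb{Z}$ gives
$$p\,N_j=\binom{p}{j}+\sum_{\omega\in\mu_p\setminus\{1\}}[x^j]\prod_{t=1}^{k}(1+x\,\omega^{v_t})^{m_t},$$
so the hypothesis $N_j=0$ for $j\notin\{0,r\}$ becomes a family of cyclotomic-integer identities constraining the pairs $(v_t,m_t)$.

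First I would dispose of the easy cases. If $r=p$ then $S$ is a minimal zero-sum sequence of length $p$ in $\mathbb{Z}/p\mathbb{Z}$, and the Bovey--Erd\H{o}s--Niven classification of zero-sum-free sequences of length $p-1$ forces the complement of any removed element to be constant, hence $S$ itself constant, contradicting $k\ge 3$. If the rarest multiplicity satisfies $m_1=1$, excising the unique copy of $v_1$ yields a length-$(p-1)$ sub-sequence $S'$ whose zero-sum subsequences are precisely those of $S$ avoiding $v_1$ and so all have length $r$; an analogous structural argument in length $p-1$ then forces $S'$ to be constant, giving $k\le 2$.

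The hard case is $m_t\ge 2$ for all $t$ (so $k\le p/2$). Here I would study the restricted sumsets
$$A_j=\Bigl\{\textstyle\sum_{t=1}^{k}n_tv_t\bmod p\ :\ 0\le n_t\le m_t,\ \sum_t n_t=j\Bigr\},\qquad 0\le j\le p,$$
whose hypothesis-imposed behaviour is $0\notin A_j$ for $j\notin\{0,r\}$. Restricted-sumset lower bounds in $\mathbb{Z}/p\mathbb{Z}$ (Cauchy--Davenport, Vosper, the Dias da Silva--Hamidoune theorem, or the polynomial method of Alon--Nathanson--Ruzsa) typically force $|A_j|$ to grow roughly as $\min(p,\,j(k-1)+1)$, so requiring every $A_j$ with $j\notin\{0,r\}$ to miss the single point $0$ is extremely restrictive. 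Pushing the equality cases of these bounds should pin the value set $\{v_1,\dots,v_k\}$ into an arithmetic progression of length at most two, contradicting $k\ge 3$.

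This dense case is the crux and the source of the "surprisingly complicated" argument flagged by Erd\H{o}s and Szemer\'edi: the restricted-sumset bounds are not individually tight enough to force $k\le 2$, and one must combine them across many values of $j$ while simultaneously tracking the interplay between subsequence sums and lengths. Carrying out the resulting case analysis uniformly for all sufficiently large primes $p$, and in particular ruling out the near-extremal AP-like configurations with three distinct residues, is the principal technical obstacle, and is almost certainly the reason the proof requires the hypothesis that $p$ be sufficiently large rather than working for all primes.
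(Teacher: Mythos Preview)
Your proposal is a plan rather than a proof, and its central case is explicitly left open. You write down the character-sum identity for $N_j$ but never use it; you then invoke restricted-sumset lower bounds (Cauchy--Davenport, Vosper, Dias da Silva--Hamidoune, the polynomial method) and hope that ``pushing the equality cases'' will force $k\le 2$, while conceding in the final paragraph that these bounds ``are not individually tight enough'' and that the resulting case analysis is ``the principal technical obstacle.'' That is precisely the gap: nothing in the proposal actually produces two zero-sum subsequences of different lengths in the dense case $m_t\ge 2$ for all $t$, which is the whole content of the theorem. Your ``easy'' case $m_1=1$ is also circular: removing the lone copy of $v_1$ leaves a sequence of length $p-1$, and you appeal to ``an analogous structural argument in length $p-1$'' --- but no such argument has been established, and the folklore Lemma~\ref{n-1} only handles length-$(p-1)$ sequences with \emph{exactly two} distinct values, so it cannot be invoked on $S'$ when $k\ge 3$.

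For comparison, this paper does not reprove Theorem~A separately; it is quoted from \cite{ES} and is subsumed by the paper's Theorem~\ref{Theorem:main resul}, whose proof follows a completely different route. Rather than character sums or sumset growth, the paper splits on whether $r\ge n/2$ or $r<n/2$. In the first case the most frequent value has multiplicity at least $r\ge n/2$ (Lemma~\ref{Lemma:Alon}), and an explicit combinatorial argument with that dominant value manufactures two zero-sums of different lengths. In the second case the complement $ST^{-1}$ of any zero-sum $T$ is a zero-sum-free sequence of length exceeding $n/2$, and the structural Theorem~\ref{Lemma:SavchevChen} of Savchev--Chen and Yuan renormalizes it so that its elements are genuine positive integers summing to less than $n$; the concrete inequalities of Lemma~\ref{Lemma:Our lemma} then locate the two required subsequences. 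The ingredient your outline is missing is exactly this structure theorem for long zero-sum-free sequences: it replaces the loose sumset-growth heuristics with a normal form that makes the combinatorics tractable and, incidentally, eliminates the ``sufficiently large prime'' hypothesis altogether.
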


In this paper, we obtain the following generalization of this result:

\medskip

\begin{theorem}\label{Theorem:main resul} { \sl Let $n$ be a positive integer and let $S$ be  a sequence
of $n$ integers in the interval $[0,n-1]$. If   there is an $r$ such that
any nonempty subsequence with  sum $\equiv 0$ $\pmod n$ has length $=r,$ then $S$ has at most two distinct values.}
\end{theorem}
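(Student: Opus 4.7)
The plan is to assume for contradiction that $S$ contains at least three distinct residues and then construct two zero-sum subsequences of different lengths. A first reduction is to factor out $d=\gcd(a_1,\dots,a_n,n)$: since every zero-sum modulo $n$ is a zero-sum modulo $n/d$ after dividing, we may as well assume the values in $S$ generate $\mathbb{Z}/n\mathbb{Z}$. A second reduction disposes of the presence of $0$ as a value: a single $0$ is a zero-sum subsequence of length $1$, so $r=1$, and then the nonzero part of $S$ is zero-sum-free, to which one can apply known structure theorems (Bovey-Erd\H{o}s-Niven type) forcing at most one additional value.

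For the generic case $2\le r\le n-1$, the principal tool is the swap argument. Fix a zero-sum subsequence $T$ of length $r$ (one exists because $|S|=n=D(\mathbb{Z}/n\mathbb{Z})$). For any $i\in T$ and $j\notin T$, the swap $T'=(T\setminus\{i\})\cup\{j\}$ has length $r$ and sum $a_j-a_i\pmod n$. Under the hypothesis, either $T'$ is zero-sum (forcing $a_i\equiv a_j$) or $T'$ has nonzero sum. Iterating with double swaps $(i_1,i_2)\leftrightarrow(j_1,j_2)$ one extracts strong additive relations among pairwise differences; the goal is to show that as soon as three distinct values occur in $S$, these relations allow one to enlarge $T$ by one element (or shrink it by one) into another zero-sum, contradicting the uniqueness of $r$.

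The hardest part is converting the local swap data into a global contradiction, and this is where the Erd\H{o}s-Szemer\'edi Theorem (Theorem~A) and an inductive reduction on $n$ enter. For $n=p$ a sufficiently large prime the statement is Theorem~A, and for small primes a finite verification (or a sharper swap analysis exploiting that $\mathbb{Z}/p$ is a field) closes the gap. For composite $n$, write $n=pm$ with $p$ prime and try to reduce either to the sequence of residues of $S$ modulo $p$ in $\mathbb{Z}/p\mathbb{Z}$, or to the sequence formed by those elements of $S$ lying in $p\mathbb{Z}/n\mathbb{Z}$ scaled down to $\mathbb{Z}/m\mathbb{Z}$. In either reduction one needs to combine blocks of $S$ into auxiliary sums so that a zero-sum at the smaller modulus lifts to a genuine zero-sum modulo $n$ of controllable length; this allows the induction hypothesis or Theorem~A to apply.

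The main obstacle is precisely this lifting/reduction step: the hypothesis "every zero-sum mod $n$ has length $r$" does not pass to quotients, because there are many more zero-sums mod $p$ or mod $m$ than mod $n$. Overcoming this requires an auxiliary combinatorial ingredient (the "ingredient proved by elementary methods, but not very shortly" hinted at in the introduction), presumably a structural result on long zero-sum-free sequences over $\mathbb{Z}/n\mathbb{Z}$ which guarantees that, once three distinct values appear, one can realize zero-sums of at least two different lengths at the level of the original modulus $n$. Once that ingredient is in hand, the swap analysis of the previous paragraph converts the multiplicity-of-lengths assertion into the desired contradiction with the hypothesis that $r$ is unique.
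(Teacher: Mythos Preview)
Your proposal is not a proof but a strategy outline, and you yourself identify where it breaks: the induction step. Passing to residues mod $p$ (or to the subgroup $p\mathbb{Z}/n\mathbb{Z}$) destroys the hypothesis ``every zero-sum has length $r$,'' and you offer no mechanism to recover it. The swap argument you describe only yields relations of the form $a_i\equiv a_j$ or $a_{i_1}+a_{i_2}\equiv a_{j_1}+a_{j_2}$, which is far too weak on its own to force at most two values; you then defer everything substantive to an unspecified ``auxiliary combinatorial ingredient.'' Relying on Theorem~A for the prime base case is also circular in spirit (the paper's goal is precisely to give a proof valid for all $n$, including all primes), and ``finite verification for small primes'' is not carried out.

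The paper's actual argument is entirely different and does \emph{not} proceed by induction on $n$ or by invoking Theorem~A. It splits according to whether $r\ge n/2$ or $r<n/2$. In the first case, the element $a$ of maximal multiplicity $h(S)\ge r\ge n/2$ is analysed directly (with a short subcase when $\gcd(a,n)>1$), and after rescaling so that $a=1$ a careful study of a zero-sum $T$ with the maximum number of distinct values yields the contradiction. In the second case the complement $ST^{-1}$ of a zero-sum $T$ is a zero-sum-free sequence of length $>n/2$; the crucial external input is the Savchev--Chen/Yuan structure theorem (Theorem~D here), which after rescaling puts $ST^{-1}$ in the form $1^{\gamma}y_1\cdots y_w$ with $\gamma+\sum y_i<n$. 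A purpose-built technical lemma (Lemma~3.1) about such sequences then provides subsequences of prescribed sum and controllable length, and a chain of inequalities forces two zero-sums of different lengths. So the ``ingredient proved by elementary methods, but not very shortly'' is Theorem~D, not a lifting lemma for the inductive scheme you had in mind.
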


In the investigation of Zero-sum sequences in an abelian group $G$, it
is quite convenient to work with an unordered sequence. This is usually done by  identifying a sequence  with an element  of the free abelian monoid generated by $G$.
This point of view together with the bases of Zero-sum Theory are presented in the text book of Geroldinger-Halter-Koch \cite{gerlodinger}.

One  may also define a sequence as a word. In this case, multiplication is just juxtaposition and thus $x^n$
is the word ${x, \ldots ,x}.$ We shall present our proofs in
such a way to fit with each of these definitions.

We give below examples of sequences with a unique non-empty length for modular zero-sum sequences.
\begin{itemize}
  \item $S=1^{n-1}x,$ where $x$ is an integer.
  \item $S=1^{n-2}{(q+1)}^2,$ where $n=2q+1.$
  \item $S=2^{q+r}{1}^{q-r},$ where $n=2q$ and $r$ is odd.
\end{itemize}

\section{Preliminaries}

Let $T$ be a subsequence of a sequence $S.$ We shall denote the sequence obtained from $S$ by deleting $T$ by
$ST^{-1}.$ The sum of elements of $S$ will be denoted by  $\sigma(S).$   The maximal repetition
of a value of $S$  will be denoted by $h(S)$.

We present below few tools:

\begin{theirlemma} (folklore) \label{Lemma:Alon} A sequence $S$ of $n$ integers in the interval $[0,n-1]$   has a  nonempty subsequence with length $\le h(S)$ and sum   $\equiv 0$  $\pmod n.$

\end{theirlemma}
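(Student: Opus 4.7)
My strategy is to combine a few trivial reductions with a pigeonhole argument on partial sums and on the multiples of a most-frequent element.

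First I would dispose of the easy cases. If $0\in S$, the singleton $(0)$ itself is a zero-sum subsequence of length $1\le h(S)$, so assume $0\notin S$. Let $v$ be a value of maximum multiplicity $h:=h(S)$, and let $m:=n/\gcd(v,n)$ denote the order of $v$ in $\mathbb{Z}/n\mathbb{Z}$. If $m\le h$, then $m$ copies of $v$ already give a zero-sum subsequence of length $m\le h$. Hence from now on $m>h$, which makes the $h+1$ residues $0,v,2v,\ldots,hv$ pairwise distinct in $\mathbb{Z}/n\mathbb{Z}$ and, via the elementary inequality $(h-1)(\gcd(v,n)-1)\ge 0$, yields $n-h\ge\gcd(v,n)$.

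In the main case, order the non-$v$ part of $S$ as $a_1,\ldots,a_{n-h}$ and form the partial sums $s_j:=a_1+\cdots+a_j$ in $\mathbb{Z}/n\mathbb{Z}$. Any collision of the form $s_i\equiv -jv\pmod n$ with $(i,j)\ne(0,0)$ corresponds to a zero-sum subsequence of $S$ consisting of $a_1,\ldots,a_i$ together with $j$ copies of $v$, and so of length $i+j$; the target is to produce such a collision with $i+j\le h$. One natural route is to project modulo the subgroup $\langle v\rangle$ onto the quotient $\mathbb{Z}/n\mathbb{Z}/\langle v\rangle\cong\mathbb{Z}/\gcd(v,n)\mathbb{Z}$: the $n-h\ge\gcd(v,n)$ projected non-$v$ elements exceed the Davenport constant of this quotient, so they contain a nonempty subsequence $T'$ with $\sigma(T')\in\langle v\rangle$, say $\sigma(T')\equiv kv\pmod n$ for some $k\in\{0,1,\ldots,m-1\}$, and one finishes by appending the required number of copies of $v$.

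The main obstacle is the length bound $i+j\le h$, as opposed to the much weaker bounds one gets naively. Raw pigeonhole on all the $s_j$ in $\mathbb{Z}/n\mathbb{Z}$ produces \emph{some} short zero-sum, but only of length $\le n$, and plain Davenport in the quotient controls $|T'|$ but not the number of extra copies of $v$ needed to lift the sum back to zero in $\mathbb{Z}/n\mathbb{Z}$. Overcoming this requires simultaneous control of $|T'|$ and of $k$, exploiting both the hypothesis $|S|=n$ (which gives enough slack in the quotient) and the maximality of $h$ as a multiplicity. A clean formulation is probably to apply the lemma inductively to the projected sequence in the smaller cyclic group $\mathbb{Z}/\gcd(v,n)\mathbb{Z}$, taking care that the projected maximum multiplicity stays well-controlled and using the slack $n-h\ge\gcd(v,n)$ to accommodate the extra copies of $v$ in the lift.
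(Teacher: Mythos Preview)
Your proposal does not actually close the argument: you correctly isolate the crux --- bounding the length $i+j\le h=h(S)$ of the lifted zero-sum --- and then defer it to an unspecified induction on the quotient $\mathbb{Z}/\gcd(v,n)\mathbb{Z}$. That induction does not go through as stated. If you project the $n-h$ non-$v$ terms to the quotient of order $d=\gcd(v,n)$ and feed any $d$ of them into the inductive hypothesis, you obtain a zero-sum (mod $d$) subsequence $T'$ of length at most the maximum multiplicity $h'$ \emph{of the projected $d$-term sequence}. But $h'$ is not controlled by $h$: many distinct residues mod $n$ can collapse to the same residue mod $d$, so $h'$ can be as large as $d$. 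After lifting, you still need up to $n/d-1$ extra copies of $v$, giving a total length as large as $d+n/d-1$, which need not be $\le h$. The slack $n-h\ge d$ you derived only says there are enough non-$v$ terms to run Davenport in the quotient; it says nothing about the \emph{length} of the resulting zero-sum or of its lift. So the ``main obstacle'' you name is a genuine gap, not a routine detail.

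For comparison, the paper does not prove this lemma either; it records it as folklore and cites Flor's argument via the Moser--Scherk theorem. That route avoids the quotient entirely. One partitions the multiset $S$ into $h$ sets $S_1,\ldots,S_h$ of pairwise distinct nonzero residues (possible precisely because every value has multiplicity $\le h$), sets $T_i=S_i\cup\{0\}$, and observes that every element of the Minkowski sum $T_1+\cdots+T_h$ is the sum of a subsequence of $S$ of length at most $h$. If no nonempty such subsequence has sum $0$, then at each stage the hypothesis of Scherk's theorem ($a+b=0$ with $a\in T_1+\cdots+T_{k-1}$, $b\in T_k$ forces $a=b=0$) is met, and iterating gives
\[
|T_1+\cdots+T_h|\ \ge\ \sum_{i=1}^h|T_i|-(h-1)\ =\ \sum_{i=1}^h(|S_i|+1)-(h-1)\ =\ n+1,
\]
which is impossible in $\mathbb{Z}/n\mathbb{Z}$. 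The decomposition into $h$ sets of distinct elements is exactly what encodes the bound $h(S)$ into a length bound; your quotient-and-lift scheme has no analogue of this step, which is why the length control evaporates.
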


 Lemma \ref{Lemma:Alon} is a special case
of Conjecture 4 of  Erd\H{o}s and  Heilbronn \cite{EH}. In a note added in proofs, Erd\H{o}s and  Heilbronn \cite{EH}
mentioned that Flor proved this conjecture using the Moser-Scherck's Theorem \cite{sch}.

The next Lemma is just an exercise:

\begin{theirlemma} (folklore) \label{n-1} A   sequence of $n-1$ integers in the interval $[0,n-1],$
assuming two distinct values, has a nonempty subsequence with sum   $\equiv 0$ $\pmod n.$
\end{theirlemma}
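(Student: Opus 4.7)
The plan is to prove the contrapositive: any sequence $S$ of length $n-1$ with entries in $[0,n-1]$ that has no nonempty subsequence summing to $0\pmod n$ must be constant. If $0\in S$, then $(0)$ is already a nonempty zero-sum subsequence, so I may assume all entries lie in $[1,n-1]$.

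Fix any ordering $a_1,\ldots,a_{n-1}$ of $S$ and form the partial sums $s_0=0$, $s_k=a_1+\cdots+a_k$. If $s_i\equiv s_j\pmod n$ for some $0\le i<j\le n-1$, then $(a_{i+1},\ldots,a_j)$ is a nonempty zero-sum subsequence, contradicting the assumption. Hence $s_0,s_1,\ldots,s_{n-1}$ are $n$ pairwise distinct residues modulo $n$, so they form a complete residue system mod $n$.

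The key step is to exploit that any rearrangement of $S$ is still zero-sum-free, so the same conclusion applies to any permutation. I apply it to the permutation obtained by swapping two adjacent entries $a_i$ and $a_{i+1}$. The new partial sums agree with the old ones except at position $i$, where the new value is $s_i'=s_{i-1}+a_{i+1}$. Both the old and new families of partial sums must form complete residue systems modulo $n$, and they differ in at most one coordinate; hence they coincide as sets, forcing $s_i'\equiv s_i\pmod n$, i.e.\ $a_i\equiv a_{i+1}\pmod n$. Since $a_i,a_{i+1}\in[0,n-1]$ this gives $a_i=a_{i+1}$. Letting $i$ run over $1,\ldots,n-2$ yields $a_1=a_2=\cdots=a_{n-1}$, contradicting the hypothesis that $S$ assumes two distinct values.

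There is no real obstacle; the only conceptual point is the adjacent-swap trick that converts the statement ``partial sums form a complete residue system in every ordering'' into ``any two consecutive entries are equal.'' This is essentially the classical derivation of Davenport's constant $D(\mathbb{Z}/n\mathbb{Z})=n$ together with the characterization of the extremal zero-sum-free sequences as powers $a^{n-1}$ with $\gcd(a,n)=1$.
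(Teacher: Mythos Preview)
Your argument is correct. The partial-sum pigeonhole step shows that the $n$ prefix sums $s_0,\ldots,s_{n-1}$ hit every residue class exactly once, and the adjacent-swap trick then forces $a_i\equiv a_{i+1}\pmod n$ for each $i$; since all entries lie in $[0,n-1]$ this gives equality, and the sequence is constant. This is indeed the classical characterization of zero-sum-free sequences of length $n-1$ in $\mathbb{Z}/n\mathbb{Z}$.

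There is nothing to compare against in the paper: the lemma is stated as folklore and labelled ``just an exercise,'' with no proof given. Your write-up supplies exactly the kind of short elementary argument the authors had in mind.
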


Let $S=a_1\cdot \ldots \cdot a_t$ be a sequence of integers. We write $m*S=(ma)\cdot \ldots \cdot (ma_t)$. The following  result is a basic tool in our approach:

\begin{theirtheorem}(\cite{SC}, \cite{Yuan})\label{Lemma:SavchevChen} Let $t$ be positive integer with $t\geq \frac{n+1}{2}.$ Let $a_1,\cdot \ldots\cdot ,a_t $ be integers and put $T=a_1\cdot \ldots \cdot a_t.$ If $T$ has no nonempty subsequence with sum   $\equiv 0$ $\pmod n.$
 Then there exists an integer $m$
co-prime to $n$ and positive integers $b_1, \ldots ,b_t$ such that
 $m*S=b_1\cdot \ldots \cdot b_t$
 and  $b_1+\ldots+ b_t<n$.

\end{theirtheorem}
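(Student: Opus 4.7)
The plan is to establish the structure of long zero-sum-free sequences in $\mathbb{Z}/n\mathbb{Z}$ via a reduction to additive combinatorics, combined with induction on the modulus. The strategy splits into three movements: normalization, a subset-sum lower bound, and an inverse-theorem step.

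For normalization, I would first argue that $\gcd(a_1,\ldots,a_t,n)=1$. Indeed, if $d>1$ were a common divisor, the reduced sequence $(a_1/d,\ldots,a_t/d)$ would be zero-sum-free modulo $n'=n/d$ with length $t\ge (n+1)/2$, which exceeds the maximal length $n'-1$ of any zero-sum-free sequence in $\mathbb{Z}/n'\mathbb{Z}$ whenever $d\ge 2$, a contradiction. Hence the $a_i$'s together with $n$ have no common factor, and in particular some $a_i$ is a unit, supplying many candidate multipliers $m$.

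Next, I would study the set of nonempty subset sums $\Sigma(T)=\{\sigma(U)\bmod n:\emptyset\ne U\subseteq T\}$. Zero-sum-freeness gives $0\notin\Sigma(T)$, so $|\Sigma(T)|\le n-1$, while classical Olson and Eggleton--Erd\H{o}s-type bounds yield $|\Sigma(T)|\ge t\ge (n+1)/2$. Thus $\Sigma(T)$ occupies at least half of the cyclic group, which is precisely the density regime where inverse theorems such as Vosper's or Freiman's $3k-4$ force rigid structure: after a unit twist, $T$ must concentrate in a short initial interval of $\{1,2,\ldots,n-1\}$, and the missing sums in $\mathbb{Z}/n\mathbb{Z}\setminus\Sigma(T)$ themselves form an interval at the ``top.''

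The crux, and the main obstacle, is converting this ``large $\Sigma(T)$'' information into the sharp conclusion that there exists a unit $m$ with $\sum_i(ma_i\bmod n)<n$. I would proceed by induction on $n$: after using a unit to arrange that some $a_i$ reduces to $1$, peel that element off and apply the inductive hypothesis to the remaining sequence (possibly in a smaller cyclic quotient, for instance modulo the index of the subgroup generated by the remaining elements), then reassemble. Making the induction go through requires carefully choosing which element to peel and which modulus to descend to so that both the length bound $t'\ge(n'+1)/2$ and zero-sum-freeness survive in the reduced problem, and then verifying that the ``positive and summing to less than $n$'' conclusion lifts cleanly back to $n$. This delicate book-keeping — rather than any deep new idea — is where the original Savchev--Chen and Yuan arguments spend most of their effort, and I expect it to be the technically demanding portion of the proof.
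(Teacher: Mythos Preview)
The paper does not prove this statement. Theorem~\ref{Lemma:SavchevChen} is quoted as a known result, attributed to Savchev--Chen \cite{SC} and Yuan \cite{Yuan}, and is invoked as a black box in Case~2 of the proof of Theorem~\ref{Theorem:main resul}. There is therefore no ``paper's own proof'' to compare your proposal against.

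As for your sketch on its own merits: the normalization step (forcing $\gcd(a_1,\ldots,a_t,n)=1$) and the lower bound $|\Sigma(T)|\ge t$ are fine and standard. The genuine gap is the inverse-theorem step. Vosper's theorem and Freiman's $3k-4$ concern sumsets $A+B$ of \emph{sets}, not the set of subset sums of a \emph{sequence with multiplicities}; the passage from ``$\Sigma(T)$ is large'' to ``after a unit twist, the $b_i$ lie in a short initial segment with $\sum b_i<n$'' does not follow from those results in any direct way, and your proposal does not supply the bridge. The induction you outline (peel off a $1$ and descend) is also problematic: removing one term drops the length to $t-1$, which need not satisfy $t-1\ge (n+1)/2$, and passing to a proper quotient $\mathbb{Z}/n'\mathbb{Z}$ requires the remaining terms to lie in a proper subgroup, which you have not arranged. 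The actual arguments in \cite{SC} and \cite{Yuan} are more direct and combinatorial (Savchev--Chen build the interval structure of $\Sigma(T)$ incrementally; Yuan works through the index of minimal zero-sum sequences), and neither proceeds via the Vosper/Freiman route or the induction you describe.
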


\section{Proof of the main result}

We start with one lemma:

\begin{lemma}\label{Lemma:Our lemma} Let $S=1^{v} a_1\cdot \ldots \cdot a_t$ be a sequence of positive integers with $v+t\geq \frac{n+1}{2}$, $t\geq
1$ and $2\leq a_1\leq \cdots\leq a_t\leq v+\sum\limits_{i=1}^t
a_i\leq n-j,$ where $j$ is a positive integer. Then the following hold:
\begin{itemize}
  \item[(i)]  $v \geq a_t+\ldots + a_{t-j+1}-j+1;$
  \item [(ii)] For any integer $k\in [2,v+\sum\limits_{i=1}^t a_i],$
there exists a subsequence $T$ of $S$ with $|T|\ge 2$  and $\sigma (T)=k;$
  \item [(iii)]
 If $v+\sum\limits_{i=1}^t a_i\leq n-2,$ then for every
integer $k\in [a_1,v+\sum\limits_{i=2}^t a_i]$, there exist two
subsequences $T_1, T_2$ of $S$ with $\sigma(T_1)=\sigma(T_2)=k$ and
$|T_1|>|T_2|$.
\end{itemize}

\end{lemma}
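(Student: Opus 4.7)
The plan is as follows.

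For part (i), the argument is an elementary rearrangement. Since each $a_i \geq 2$, one has $\sum_{i=1}^{t-j} a_i \geq 2(t-j)$, and combining with the hypothesis $v + \sum a_i \leq n - j$ yields
$$a_{t-j+1} + \cdots + a_t = \sum_{i=1}^t a_i - \sum_{i=1}^{t-j} a_i \leq n + j - v - 2t.$$
The claimed bound $v \geq (a_{t-j+1} + \cdots + a_t) - j + 1$ is then equivalent to $2v + 2t \geq n+1$, which is exactly twice the hypothesis $v + t \geq (n+1)/2$. (The edge case $j > t$ reduces to the same computation with the understanding that missing $a_i$'s contribute $0$.)

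For part (ii), the key consequence of (i) is its $j = 1$ instance, $v \geq a_t$, so $2 \leq a_i \leq v$ for all $i$. A short induction on $t$ then shows that the set of subsums of $1^v a_1 \cdots a_t$ equals the full interval $[0, v + \sum a_i]$: at each step, the shift by $a_{t+1} \leq v$ overlaps the previously attained interval, so no gap appears. Hence each $k \in [2, v + \sum a_i]$ is realized by some representation. If $k \leq v$, use $k$ copies of $1$, whose length $k \geq 2$. If $k > v$, every representation must use at least one $a_i$ (ones alone give at most $v$), and a length-one representation would force $k = a_i \leq v$, contradicting $k > v$. So in both cases the representation has length $\geq 2$.

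For part (iii), the strengthened hypothesis $v + \sum a_i \leq n-2$ allows invoking (i) with $j = 2$, yielding the extra slack $v \geq a_t + a_{t-1} - 1$. For $k \in [a_1, v]$ it suffices to take $T_1 = 1^k$ and $T_2 = a_1 \cdot 1^{k-a_1}$; the lengths $k$ and $1 + (k - a_1)$ differ by $a_1 - 1 \geq 1$. For $k \in (v, v + \sum_{i \geq 2} a_i]$ I work with the identity that a representation $(J, u)$ (subset $J \subseteq [t]$ of the $a_i$'s together with $u$ ones) has length $|J| + u = k - \sum_{i \in J}(a_i - 1)$, so two representations differ in length iff the quantities $\sum_{i \in J}(a_i - 1)$ differ. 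When the $a_i$ are not all equal, two singleton-$J$ representations with distinct $a_i$ (e.g., $J = \{1\}$ vs.\ $J = \{t\}$) already work whenever both are valid; when $k$ is too large to admit $|J| = 1$, the slack $v \geq a_t + a_{t-1} - 1$ permits swapping one $a_i$ in or out while keeping $\sigma_J \in [k-v, k]$. In the all-equal case $a_i \equiv a$, each representation has $|J| = m$ for some $m$ in the interval $[\lceil (k-v)/a \rceil, \lfloor k/a \rfloor]$; since $k > v \geq 2a - 1$ forces $k \geq 2a$, this interval contains at least two integers, giving two representations whose lengths differ by the nonzero amount $a - 1$.

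The main obstacle will be part (iii) in the range $k > v$. Unlike the clean uniform estimates behind (i) and (ii), the subset-sum structure here depends on coincidences among the $a_i$, so the proof splits into the all-equal and not-all-equal cases, and in each case the slack $v \geq a_t + a_{t-1} - 1$ from (i) with $j = 2$ is essential to ensure a second representation of different length exists.
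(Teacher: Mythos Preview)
Your proofs of (i) and (ii) are correct and essentially match the paper's. For (i) the paper runs the same chain of inequalities; for (ii) the paper gives an explicit greedy construction (take the longest prefix $a_1,\ldots,a_\ell$ with $\sum_{i\le\ell}a_i\le k$ and pad with $1$'s), whereas you argue existence by an interval--union induction and then rule out length $\le 1$ separately. Both work.

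Part (iii) is where you diverge from the paper. You correctly identify that the $j=2$ instance of (i), namely $v\ge a_{t-1}+a_t-1$, is the crucial extra slack, and your treatment of the range $k\in[a_1,v]$ and of the all--equal case for $k>v$ is fine. However, your not--all--equal case for $k>v$ is only a sketch and has a visible gap: the singleton pair $J=\{1\}$ versus $J=\{t\}$ is valid only when $k\le v+a_1$, while the phrase ``too large to admit $|J|=1$'' naturally reads as $k>v+a_t$; the intermediate range $v+a_1<k\le v+a_t$ is not covered, and the ``swap one $a_i$ in or out'' step is not made precise enough to see which $J,J'$ you intend or why both land in $[k-v,k]$ with different values of $\sum_{i\in J}(a_i-1)$.

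The paper avoids this case split entirely with a single greedy construction that works uniformly: let $s$ be the largest index with $\sum_{i\le s}a_i\le k$, and compare the two representations using the index sets $\{1,\ldots,s\}$ and either $\{1,\ldots,s-1\}$ (if $s<t$) or $\{2,\ldots,t\}$ (if $s=t$). In the first case the bound $k-\sum_{i<s}a_i\le a_s+a_{s+1}-1\le a_{t-1}+a_t-1\le v$ supplies enough $1$'s; in the second case $k-\sum_{i\ge2}a_i\le v$ is exactly the hypothesis on $k$. The two lengths differ by $a_s-1\ge1$ (respectively $a_1-1\ge1$). This handles all $k\in[a_1,v+\sum_{i\ge2}a_i]$ at once, with no all--equal/not--all--equal dichotomy. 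Your approach can be completed, but the paper's greedy argument is both shorter and cleaner.
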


\begin{proof}
We have clearly \begin{eqnarray*}n-j&\ge & v+\sum\limits_{i=1}^t
a_i\\&\geq &v+2(t-j)+\sum\limits_{i=t-j+1}^t
a_i\\&=&2(v+t)-2j-v+\sum\limits_{i=t-j+1}^t
a_i\ge n+1-2j-v+\sum\limits_{i=t-j+1}^t
a_i.
\end{eqnarray*}
Thus  (i) holds.

 By (i), we have $a_t\leq v$ and (ii) holds clearly for $k\le a_t$.  may Assume $k>a_t$.
Let $\ell$ be the maximal integer of $[1,t]$ such that
$\sum\limits_{i=1}^{\ell} a_i\leq k$ and put
$k^{'}=k-\sum\limits_{i=1}^{\ell} a_i$. Note that $k^{'}\leq
v$. Thus, $(\prod\limits_{i=1}^{\ell} a_i)\cdot 1^{k^{'}}$ is a
subsequence of $S$ of length at least two and of sum $=k$, proving (ii).

Let us prove (iii). Assume that $v+\sum\limits_{i=1}^t a_i\leq n-2.$
Since $a_t=\sigma(a_t)=\sigma (1^{a_t}),$ and since  $v\geq a_1$ by (i), we may assume that $t\geq 2$.

By (i), $v\geq a_{t-1}+a_t-1$.

Let $s$ be the maximal integer of $[1,t]$ such that
$\sum\limits_{i=1}^{s} a_i\leq k$.

For $s=t$, we have $k-\sum\limits_{i=2}^t a_i\leq (v+
\sum\limits_{i=2}^t a_i)-\sum\limits_{i=2}^t a_i =v$. Thus,
$(\prod\limits_{i=1}^t a_i)\cdot 1^{k-\sum\limits_{i=1}^t a_i}$ and
$(\prod\limits_{i=2}^t a_i)\cdot 1^{k-\sum\limits_{i=2}^t a_i}$ are
two subsequences of $S$ with sum $=k$ and of distinct lengths.

For $s<t$, we have  $$k-\sum\limits_{i=1}^{s-1} a_i\leq
(\sum\limits_{i=1}^{s+1} a_i-1)-\sum\limits_{i=1}^{s-1}
a_i=a_{s}+a_{s+1}-1\leq a_{t-1}+a_t-1\leq v.$$ Thus,
$(\prod\limits_{i=1}^{s-1} a_i)\cdot
1^{k-\sum\limits_{i=1}^{s-1} a_i}$ and
$(\prod\limits_{i=1}^{s} a_i)\cdot 1^{k-\sum\limits_{i=1}^{\ell}
a_i}$ are two subsequences of $S$ with sum $k$ and of distinct
lengths.\end{proof}

\begin{proofof}{Theorem}{\ref{Theorem:main resul}}

Suppose to the contrary of the theorem that $S$ assumes three
distinct values. Then   $0$ is not among the values of $S,$
otherwise $S\cdot 0^{-1}$ would be a modular zero-sum free
subsequence of $S$ with length $n-1$, and hence $S\cdot 0^{-1}$
assumes only one value,  by lemma \ref{n-1}, a contradiction.

We distinguish two cases.

{\bf Case 1}  $r\ge \frac{n}{2}.$

By lemma \ref{Lemma:Alon}, we have $r\le h(S).$

Put $S=a^{v} a_1\cdot \ldots \cdot a_t,$
where  $a_i\neq a,$ for $i=1, \ldots , t.$

By our assumption, we have  $t\geq 2.$

Assume first that  $\gcd(a,n)>1$. Thus  $h(S)\ge r\ge \frac{n}{2}\ge
\frac{n}{\gcd(a,n)}$. It forces that
$r=\frac{n}{\gcd(a,n)}=\frac{n}{2}$ and $\gcd(a,n)=2.$ It follows
that $2 \nmid a_i,$ for $i=1, \ldots , t.$ Otherwise $a_i=\ell a$
$\pmod n$ for some positive integer $2\le \ell \le
\frac{n}{\gcd(a,n)}=\frac{n}{2}$ and $a^{ \frac{n}{2}-\ell}\cdot
a_i$ would be a modular zero-sum subsequence with length $<r.$

 Since $\gcd(a,n)=2$ and all $a_i$ are odd, we have that $a_i+a_j=s_{ij}a $ $\pmod n$
  for any $i\ne j\in \{1,2, \ldots,  t\}$ for some $0\le s_{ij}\le \frac{n}{2}-1.$ Now
  $\frac{n}{2}=r=|(a_i\cdot a_j) 1^{\frac{n}2-s_{ij}}|=2+\frac{n}{2}-s_{ij}.$
It follows that $s_{ij}=2.$ Therefore $a_i+a_j\equiv 2a$ for any  $
i\in \{1,2, \ldots, t\}.$

If $t\ge 3$. Since $a_i+a_j\equiv 2a$ $\pmod n$ and thus (observing
that $a_i\in [1,n-1]$) $a_1=a_2= \ldots =a_{t},$  a contradiction.
So, we have $t=2$. But Now we have $a^{n-2}(a_1a_2)$ is also
zero-sum modulo $n$, a contradiction.

Therefore, we assume that $\gcd(a,n)=1.$ Thus for some $m$ coprime to $n,$ we have  $m*S=R=1^v b_1\cdot \ldots \cdot b_t,$ and $2\le b_1\le  \ldots  \le b_t \le n-1.$ Clearly every modular zero-sum subsequence of $R$ has length $=r.$

Now we shall show that
\begin{equation}\label{x<n-v}
b_t\leq n-v-1.
\end{equation}
Suppose to the contrary that $$b_t\geq n-v.$$ We must have $b_1\leq
n-v-1,$  since otherwise, $b_1\cdot 1^{n-b_1}$ and $b_t\cdot
1^{n-b_t}$ would  be two modular zero-sum subsequences of $R$ of
distinct lengths. Since $b_t\cdot 1^{n-b_t}$ is a modular zero-sum
subsequence of $S$, we have that $n-b_t+1=|b_t\cdot 1^{n-b_t}|\geq
\frac{n}{2}\geq n-v$, and so $b_t\leq v+1$.  Notice that
$b_1+b_t\leq v+1+(n-v-1)=n$. Thus, $b_1\cdot b_t\cdot 1^{n-b_1-b_t}$
and $b_t\cdot 1^{n-b_t}$ are two modular zero-sum subsequences of
$R$ of distinct lengths, a contradiction.

Choose a  subsequence $T$ of $R$ with $\sigma (T)\equiv 0 \ \mod n$ with a maximal number of values.
 Put $T=1^{\tau}\cdot x_1\cdot \ldots\cdot x_u$ and
$ST^{-1}=1^{\gamma}\cdot y_1\cdot\ldots\cdot y_w.$

We shall assume that $2\le x_1\le \ldots  \le x_u$ and that $2\le y_1\le \ldots  \le y_w.$

We must have
 $$x_1\ge \gamma +1,$$ otherwise $\sigma (1^{x_1+\tau}\cdot x_2\cdot \ldots\cdot x_u)\equiv 0\ \pmod n.$

Similarly $y_1\ge  \tau +1.$

 Clearly, $u\geq 1$. By \eqref{x<n-v} and since $|T|\le v,$ we have

\begin{eqnarray*} w
&=&|ST^{-1}|-\gamma \\&=&n-|T|-\gamma \geq n-v-\gamma
\\&\geq &  n-x_1-v+1 \\ &\ge & n-b_t-v+1\ge 2.
\end{eqnarray*}

By \eqref{x<n-v} and since $v\geq \frac{n}{2}$, we have that
$b_t<\frac{n}{2}.$
It follows that $$n>
x_u+y_w\ge x_1+y_1\ge \gamma +1+ \tau+1=v+2> n-v.$$

Since $x_1y_11^{n-x_1-y_1}$ and $x_uy_w1^{n-x_u-y_w}$ are modular
zero-sum subsequences, we conclude that $x_1=\cdots=x_u$ and
$y_1=\cdots=y_w$. Since $S$ has at least  $3$ distinct values, we
have $x_1\neq y_1$, thus, $T'=1^{n-x_1-y_1}\cdot x_1\cdot y_1$ is a
modular zero-sum subsequence of $R,$ with more distinct values than
$T,$ a contradiction.

{\bf Case 2} $r<\frac{n}{2}.$

Choose a modular  zero-sum subsequence $T$ of $S$. Then $ST^{-1}$ a
modular zero-sum free subsequence with $|ST^{-1}|>\frac{n}{2}$. By
Theorem \ref{Lemma:SavchevChen}, for some positive integer  $m$
coprime to $n,$ we have $m*(ST^{-1})=1^{\gamma}\cdot y_1\cdot
\ldots\cdot y_w,$ where $2\leq y_1\leq \cdots\leq
y_w<\gamma+\sum\limits_{i=1}^w y_i\leq n-1.$ Put $R=m*S.$ Clearly
every modular zero-sum subsequence of $R$ has length $=r.$ So
without loss of generality, we may take $m=1.$ Also, put
$T=1^{\tau}\cdot x_1\cdot \ldots\cdot x_u,$ where $2\leq x_1\leq
\cdots\leq x_u\leq n-1$.

We first note that
$$x_1\geq \gamma+1,$$
otherwise $1^{x_1}\cdot (x_1^{-1}T)$ is a zero-sum sequence of
length larger than $|T|$, a contradiction.

 We must have   $w\geq 1$. Otherwise,
$\gamma =|ST^{-1}|\ge \frac{n+1}{2}$ and hence $x_1\geq
\gamma+1>n-\gamma$. Therefore, $1^{n-x_i+\beta}\cdot (x_i^{-1}T)$ is
a modular zero-sum subsequence of $S$ for every $i=1, \cdots, u$.
This forces that $x_1=\cdots =x_u$, a contradiction on that $S$
takes at least three distinct values.

We must have $u\geq 2,$ since otherwise (observing that $u\neq 0$),

$x_1=n-\tau=n-|T|+1=|ST^{-1}|+1\leq \gamma+\sum\limits_{i=1}^w y_i$.
By Lemma \ref{Lemma:Our lemma} (ii) with $j=1$, there is a
subsequence $U$ of $ST^{-1}$ with $|U|\ge 2$ such that $x_1=\sigma
(U).$ Now $1^{\tau}x_1$ and
 $1^{\tau}U$ are modular zero-sum subsequences with distinct lengths,
 a
contradiction.

Thus,
\begin{equation} \label{equw}
 w\geq 1 \ \mbox{ and} \  u\geq 2.\end{equation}

Let $X_{\ell}$  be the unique integer of $[0,n-1]$ such that
$$X_{\ell}\equiv\sum\limits_{i=1}^{\ell}x_i\pmod n$$ for
$\ell=1,\ldots,u$.

Applying Lemma \ref{Lemma:Our lemma} (ii), we have that
\begin{equation}\label{bound for x_1}
x_1\geq \gamma+\sum\limits_{i=1}^w y_i+1,
\end{equation}
and so $$\gamma+\sum\limits_{i=1}^w y_i\leq x_1-1 \leq n-2.$$

By Lemma \ref{Lemma:Our lemma} (iii), we have that
\begin{equation}\label{sums of T}
\sum(T)\cap [y_1,\gamma+\sum\limits_{i=2}^w y_i]=\emptyset,
\end{equation}
where $\sum(T)$ denotes the set of the sums of the nonempty subsequences of $T.$

By \eqref{bound for x_1}, we have that
\begin{equation}\label{x_i>n/2+2}
x_i\geq x_1\geq \gamma+\sum\limits_{i=1}^w y_i+1\geq
|ST^{-1}|+1>\frac{n}{2}+1
\end{equation}

for $i=1,\ldots,u$, which implies
\begin{equation}\label{sums of two elements}
x_{i_1}+x_{i_2}\not\equiv 1,2\pmod n
\end{equation}
for any $1\leq i_1<i_2\leq u$. By lemma \ref{Lemma:Our lemma} (i),
we see that

\begin{equation}\label{the number of 1}
\gamma\geq y_1.
\end{equation}

Therefore, by \eqref{sums of T}, \eqref{sums of two elements} and
\eqref{the number of 1} we conclude that $X_2\notin
[1,\gamma+\sum\limits_{i=2}^w y_i]$, i.e., $x_1+x_2-n=X_2\geq
\gamma+\sum\limits_{i=2}^w y_i+1\geq
\gamma+w=|ST^{-1}|\geq\frac{n+1}{2}$. It follows that $x_2\geq
\frac{x_1+x_2}{2}\geq\frac{3n+1}{4}=n-\frac{n-1}{4}$, i.e.,
\begin{equation}\label{x_2>n-n/4}
x_2\geq \lceil n-\frac{n-1}{4}\rceil=n-\lfloor \frac{n-1}{4}\rfloor.
\end{equation}

Now we shall show that
\begin{equation}\label{the upper bound for x_u}
x_u\leq n-\tau-3.
\end{equation}
Since $u\geq 2$, we have $x_u\leq n-\tau-1$. Suppose $x_u\in
\{n-\tau-1,n-\tau-2\}$. Then $X_{u-1}\in \{1,2\}$. By
\eqref{x_i>n/2+2} and \eqref{sums of two elements}, we have $u-1\geq
3$. By \eqref{the number of 1}, $\gamma\geq y_1\geq 2$, thus,
$T\cdot(\prod\limits_{i=1}^{u-1} x_i)^{-1} \cdot 1^{X_{u-1}}$ is a
zero-sum subsequence of $S$ with length $|T|-(u-1)+X_{u-1}\leq
|T|-3+2=|T|-1$, a contradiction. Therefore, $x_u\leq n-\tau-3$.

Let $t\in [1,u]$ be the largest integer such that
$X_i>\lceil\frac{n-1}{4}\rceil$ for every $i\in \{1,\ldots,t\}$. By
\eqref{x_2>n-n/4} and \eqref{the upper bound for x_u}, we see that
$n-\lfloor\frac{n-1}{4}\rfloor\leq x_i\leq n-3$ for $i=2,\ldots,u$.
It follows that
\begin{equation}\lceil\frac{n-1}{4}\rceil<
X_{\ell}=X_{\ell-1}+x_{\ell}-n\leq X_{\ell-1}-3
\end{equation}
for $\ell=2,3,\ldots,t$. Put
$$q=\min(\lceil\frac{u+1}{3}\rceil,t).$$

We shall show that $$X_q\leq  \gamma+\sum\limits_{i=2}^w y_i.$$ If
$q=\lceil\frac{u+1}{3}\rceil\leq t$, then by (10) $X_q\leq
X_1-3(q-1)=x_1-3(q-1)\leq n-\tau-3-3\lceil\frac{u+1}{3}\rceil+3\leq
n-\tau-u-1=|ST^{-1}|-1\leq \gamma+\sum\limits_{i=2}^w y_i$. If
$q=t<\lceil\frac{u+1}{3}\rceil\leq u$, then $X_{t+1}\leq
\lceil\frac{n-1}{4}\rceil$, which implies that
$X_t=X_{t+1}+(n-x_{t+1})\leq X_{t+1}+\lfloor\frac{n-1}{4}\rfloor\leq
\lceil\frac{n-1}{2}\rceil\leq |ST^{-1}|-1\le
\gamma+\sum\limits_{i=2}^w y_i$.

Thus, by \eqref{sums of T} and \eqref{the number of 1}, we have that
$X_q<y_1\leq \gamma$, thus, $T\cdot (\prod\limits_{i=1}^q
x_i)^{-1}\cdot 1^{X_q}$ is a zero-sum subsequence of $S$ with length
$|T|-q+X_q>
|T|-\lceil\frac{u+1}{3}\rceil+\lceil\frac{n-1}{4}\rceil\geq |T|-
\lceil\frac{|T|+1}{3}\rceil+\lceil\frac{n-1}{4}\rceil\geq |T|-
\lceil\frac{n+1}{6}\rceil+\lceil\frac{n-1}{4}\rceil\geq |T|$, a
contradiction.
\end{proofof}

\bigskip

\noindent {\bf Acknowledgement.} This research has been supported in
part by the 973 Project, the PCSIRT Project of the Ministry of
Education, the Ministry of Science and Technology, the National
Science Foundation of China. This paper was completed partly during
a visit by the first author to University of  P. et M. Curie in
France. He would like to thank the host institution for its kind
hospitality.


\begin{thebibliography}{99}

\bibitem{Alon} N. Alon, {\it Subset sums},  J. Number Theory,
27(1987) 196-205.

\bibitem{EG} P. Erd\H{o}s and R.L. Graham, {\it Old and new problems and results in combinatorial number
theory}.
 Monographies de L'Enseignement Mathématique, 28. Université de Genève, L'Enseignement Mathématique, Geneva, 1980. 128 pp. (Reviewer: L. C. Eggan)

\bibitem{EH} P. Erd\H{o}s and H. Heilbronn,
{\it On the Addition of residue classes mod  $p$},  Acta Arith.
9(1964), 149-159.

\bibitem{ES} P. Erd\H{o}s and E. Szemer\'edi,
{\it On a problem of Graham},  Publ. Math. Debrecen, 23(1976), no.
1-2, 123-127.

\bibitem{gerlodinger} A. Geroldinger, F. Halter-Koch, {\it Non-unique factorizations.
Algebraic, combinatorial and analytic theory. Pure and Applied
Mathematics (Boca Raton)}, 278. Chapman \& Hall/CRC, Boca Raton, FL,
2006. xxii+700 pp.

\bibitem{SC} S. Savchev and F. Chen, {\it Long zero-free sequences in finite cyclic groups},  Discrete
Mathematics, 307(2007) 2671-2679.
\bibitem{sch}{ P. Scherk}, { Distinct elements in a set of sums},  Amer. Math. Monthly, 62 (1955), pp. 46--47.
\bibitem{Yuan} P.Z. Yuan, {\it On the index of minimal zero-sum sequences
over finite cyclic groups},  J. Combin. Theory Ser. A, 114(2007)
1545-1551.


\end{thebibliography}
\end{document}